\newtheorem {theorem}{Theorem}[section]
\newtheorem{definition}{Definition}[section]
\newenvironment{proof}[1][Proof]{\textbf{#1.} }{\
\rule{0.5em}{0.5em}}
\begin{document}

\title{On mutual information estimation for mixed-pair random variables}

\maketitle
\begin{center}
\bigskip
Aleksandr Beknazaryan,
Xin Dang
and Hailin Sang
$^{1}$\footnotetext[1]
{Corresponding author.}

\bigskip Department of Mathematics, The University of Mississippi,
University, MS 38677, USA. E-mail: abeknaza@olemiss.edu, xdang@olemiss.edu, sang@olemiss.edu

\end{center}
\begin{center}
\bigskip
\textbf{Abstract}
\end{center}
We study the mutual information estimation for mixed-pair random variables. One random variable is discrete and the other one is continuous. We develop a kernel method to estimate the mutual information between the two random variables. The estimates enjoy a central limit theorem under some regular conditions on the distributions. The theoretical results are demonstrated by simulation study. 

\noindent Keywords: central limit theorem, entropy,  kernel estimation,
mixed-pair, mutual information.\\

\noindent  {\textit{MSC 2010 subject classification}: 62G05, 62G20}

\section{Introduction}

The entropy of a discrete random variable $X\in \mathbb{R}^d$ with countable support $\{x_1,x_2,...\}$ and $p_i=\mathbb{P}(X=x_i)$ is defined to be
$$H(X)=-\sum\limits_{i}p_i\log p_i,$$
and the (differential) entropy of a continuous random variable $Y\in \mathbb{R}^d$ with probability density function $f(y)$ is defined as 
$$H(Y)=-\int_{\mathbb{R}^d}f(y)\log f(y)dy.$$

If $d\ge 2$, $H(X)$ or $H(Y)$ is also called the joint entropy of the components in $X$ or $Y$. Entropy is a measure of distribution uncertainty and naturally it has application in the fields of information theory, statistical classification, pattern recognition and so on. 

 Let $P_{X}$, $P_{Y}$ be probability measures on some arbitrary measure spaces $\mathcal{X}$  and  $\mathcal{Y}$ respectively. Let $P_{XY}$ be the joint probability measure on the space $\mathcal{X}\times \mathcal{Y}$.  If  $P_{XY}$ is absolutely continuous with respect to the product measure $P_X\times P_Y$, let $\frac{dP_{XY}}{d(P_X\times P_Y)}$ be the Radon-Nikodym derivative. Then the general definition of  the mutual information (e.g.,  \cite{Gao}) is given by
 \begin{equation}\label{MIdef}
 I(X,Y)=\int _{\mathcal{X}\times \mathcal{Y}}dP_{XY}\log \frac{dP_{XY}}{d(P_X\times P_Y)}. 
 \end{equation}
If two random variables $X$ and $Y$ are either both discrete or both continuous then the mutual information of $X$ and $Y$ can be  expressed in terms of entropies as 
\begin{equation}\label{3H}
I(X,Y)=H(X)+H(Y)-H(X,Y).
\end{equation}

However, in practice and application, we often need to work on a mixture of continuous and discrete random variables. There are several ways for the mixture. 1). One random variable $X$ is discrete and the other random variable $Y$ is continuous; 2). A random variable $Z$ has both discrete and continuous components, i.e., $Z=X$ with probability $p$ and $Z=Y$ with probability $1-p,$ where $0<p<1$, $X$ is a  discrete random variable and $Y$ is a continuous random variable; 3). a random vector with each dimension component being discrete, continuous or mixture as in 2).  

In \cite{Nair}, the authors extend the definition of the joint entropy for the first case mixture, i.e., for the pair of random variables, where the first random variable is discrete and the second one is continuous. 
%
Our goal is to study the mutual information for that case
and provide the estimation of the mutual information from a given i.i.d. sample $\{X_i,Y_i\}_{i=1}^N$. 

In \cite{Gao}, the authors applied the $k$-nearest neighbor method to estimate the Radon-Nikodym derivative and, therefore, to estimate the mutual information for all three mixed cases. 
In the literature,  if the random variables $X$ and $Y$ are either both discrete or both continuous, the estimation of mutual information is usually performed by the estimation of the three entropies in (\ref{3H}). The estimation of a differential entropy has been well studied. An incomplete list of the related research includes the nearest-neighbor estimator 
\cite{KL},
\cite{Tv},
\cite{LPS};   the kernel estimator 
\cite{AL},
\cite{Joe},
\cite{Hall1},
\cite{Hall} and  the orthogonal projection estimator 
\cite{L1}, \cite{L2}.
 Basharin \cite{Basharin} studied the plug-in entropy estimator for the finite value discrete case and obtained the mean, the variance and the central limit theorem of this estimator. Vu, Yu and Kass \cite{VYK} studied the coverage-adjusted entropy estimator with unobserved values for the infinite value discrete case. 

\section{Main results}

 Consider a random vector $Z = (X, Y)$. We call $Z$
a mixed-pair if $X\in \mathbb{R}$ is a discrete random variable with countable support $\mathcal{X}=\{x_1,x_2,...\}$ while $Y\in \mathbb{R}^d$ is a continuous random variable. Observe that $Z = (X, Y)$ induces measures $\{\mu_1, \mu_2, \cdots\}$ that are absolutely continuous with respect to
the Lebesgue measure, where $\mu_i(A) = \mathbb{P}(X = x_i, Y \in A)$, for every Borel set $A$ in $\mathbb{R}^d$. 
There  exists a non-negative function $g(x, y)$ with $h(x):=\int_{\mathbb{R}^d}g(x,y)dy$ be the probability mass function on $\mathcal{X}$ and $f(y):=\sum_i g_i(y)$ be the marginal density function of $Y$. Here,  $g_i(y)=g(x_i,y)$, $ i\in \mathbb{N}$. In particular, denote $p_i=h(x_i), i\in \mathbb{N}$. We have that $$f_i(y)=\frac{1}{p_i}g_i(y)$$ is the probability density function of $Y$ conditioned on $X = x_i$. In \cite{Nair}, the authors gave the following regulation of mixed-pair and then defined the joint entropy of a mixed-pair. 

\begin{definition} (Good mixed-pair). A mixed-pair random variables $Z = (X, Y )$ is called good if the following
condition is satisfied: $$\int_{\mathcal{X}\times\mathbb{R}^d}|g(x, y)\log g(x,y)|dxdy=\sum\limits_{i}\int_{\mathbb{R}^d}|g_i(y)\log g_i(y)|dy<\infty.$$ 
\end{definition}

Essentially, we have a good mixed-pair random variables when restricted to any of the $X$ values, the conditional differential entropy of $Y$ is well-defined.

\begin{definition} (Entropy of a mixed-pair). The entropy of a good mixed-pair random variable is defined by
$$H(Z)=-\int_{\mathcal{X}\times\mathbb{R}^d}g(x, y)\log g(x, y)dxdy=-\sum\limits_{i}\int_{\mathbb{R}^d}g_i(y)\log g_i(y)dy.$$
\end{definition}

As $g_i(y)=p_if_i(y)$ then we have that 

\begin{equation}\label{entropy}
\begin{split}
&H(Z)= -\sum\limits_{i}\int_{\mathbb{R}^d}g_i(y)\log g_i(y)dy \\
&=-\sum\limits_{i}\int_{\mathbb{R}^d}p_if_i(y)\log p_if_i(y)dy\\
&= -\sum\limits_{i}p_i\log p_i\int_{\mathbb{R}^d}f_i(y)dy-\sum\limits_{i}p_i\int_{\mathbb{R}^d}f_i(y)\log f_i(y)dy \\
&= -\sum\limits_{i}p_i\log p_i-\sum\limits_{i}p_i\int_{\mathbb{R}^d}f_i(y)\log f_i(y)dy\\
&=H(X)+\sum\limits_{i}p_iH(Y|X=x_i).
\end{split}
\end{equation}
 
We take the convention $\log 0=0$ and $\log 0/0=0$. From the general formula of the mutual information (\ref{MIdef}),  we get that
\begin{equation}\label{mutual}
\begin{split}
&I(X,Y)=\int_{\mathcal{X}\times \mathbb{R}^d}g(x, y)\log \frac{g(x, y)dxdy}{h(x)f(y)dxdy}dxdy \\
&=\sum\limits_{i}\int_{\mathbb{R}^d}g_i(y)\log \frac{g_i(y)}{p_if(y)}dy \\
&=\sum\limits_{i}\int_{\mathbb{R}^d} g_i(y)\log g_i(y)dy-\sum\limits_{i}\int_{\mathbb{R}^d} g_i(y)\log p_idy-\sum\limits_{i}\int_{\mathbb{R}^d} g_i(y)\log f(y)dy\\
&=\sum\limits_{i}\int_{\mathbb{R}^d} p_if_i(y)\log [p_if_i(y)]dy-\sum\limits_{i}p_i\log p_i\int_{\mathbb{R}^d} f_i(y)dy-\int_{\mathbb{R}^d} f(y)\log f(y)dy\\
&=\sum\limits_{i}p_i\log p_i\int_{\mathbb{R}^d} f_i(y)dy+\sum\limits_{i}p_i\int_{\mathbb{R}^d} f_i(y)\log f_i(y)dy-\sum\limits_{i}p_i\log p_i-\int_{\mathbb{R}^d} f(y)\log f(y)dy\\
&=-H(Z)+H(X)+H(Y)=H(Y)-\sum\limits_{i}p_iH(Y|X=x_i):=H(Y)-\sum_i I_i.
\end{split}
\end{equation} 

Let $(X,Y), (X_1,Y_1),...,(X_N,Y_N)$ be a random sample drawn from a mixed distribution with discrete component having support $\{0,1, \cdots, m\}$, and let $p_i=\mathbb{P}(X=i)$, $0\le i\le m$ with $0<p_i<1, \sum p_i=1$. Also suppose that the continuous component has pdf $f(y)$. Denote $\hat{p}_i=\sum\limits_{k=1}^{N}\mathbb{I}(X_k=i)/N$, $0\le i\le m,$ and let   
\begin{equation}\label{Ibar0}
\begin{split}
&\bar{I}_i = -\hat{p}_i\bigg[N\hat{p}_i\bigg]^{-1}\sum\limits_{k=1}^{N}\mathbb{I}(X_k=i)\log f_i(Y_k) \\
&= -N^{-1}\sum\limits_{k=1}^{N}\mathbb{I}(X_k=i)\log f_i(Y_k)
\end{split}
\end{equation}
and 
\begin{equation}\label{Hbar}
\begin{split}
&\bar{H}(Y) = - N^{-1}\sum\limits_{k=1}^{N}\log f(Y_k)
\end{split}
\end{equation}
be  the estimators of $I_i=p_iH(Y|X=i)$, $0\le i\le m$, and $H(Y)$ respectively, where $f_i(y)$ is the probability density function of $Y$ conditioned on $X = i$, $0\le i\le m$. Denote $a=(1,-1,\cdots, -1)^\intercal$. Let  $\Sigma$ 
be the covariance matrix of $(\log f(Y), \mathbb{I}(X=0)\log f_0(Y), \cdots,\mathbb{I}(X=m)\log f_m(Y))^\intercal$.
\begin{theorem}\label{thm1}
$a^\intercal\Sigma a>0 $ if and only if $X$ and $Y$ are dependent. 
For the estimator 
\begin{equation}\label{muestimator}
\bar{I}(X,Y)=\bar{H}-\sum_{i=0}^m\bar{I}_i
\end{equation}
of $I(X,Y)$ we have that 
\begin{equation}\label{clt1}
\sqrt{N}(\bar{I}(X,Y)-I(X,Y))\to N(0,a^\intercal\Sigma a)
\end{equation}
given that $X$ and $Y$ are dependent.
Furthermore, the variance $a^\intercal\Sigma a$ can be calculated by 
\begin{equation}\label{sigma}
\begin{split}
& a^\intercal\Sigma a= var\big(\log f(Y)\big)+\sum_{i=0}^m p_i E_i[\log f_i(Y)]^2-\sum_{i=0}^m p_i^2\big(E_i[\log f_i(Y)]\big)^2\\
&-2 \sum_{i=0}^m p_i[E_i\log f_i(Y)\log f(Y)-E_i\log f_i(Y) E\log f(Y)]\\
&-2\sum_{0\le i<j\le m}p_ip_j[E_i\log f_i(Y)][E_j\log f_j(Y)],
\end{split}
\end{equation}
where $E_i$ is the conditional expectation of $Y$ given $X=i, 0\le i\le m$.
\end{theorem}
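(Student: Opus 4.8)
The plan is to exploit the fact that, because $f$ and the conditional densities $f_i$ enter the estimator as fixed known functions, the quantity $\bar I(X,Y)$ in (\ref{muestimator}) is simply a sample mean of i.i.d.\ terms. Put
\begin{equation*}
V_k=\big(\log f(Y_k),\ \mathbb{I}(X_k=0)\log f_0(Y_k),\ \dots,\ \mathbb{I}(X_k=m)\log f_m(Y_k)\big)^\intercal ,
\end{equation*}
which are i.i.d.\ copies of the vector $V$ whose covariance matrix is $\Sigma$, and recall $a=(1,-1,\dots,-1)^\intercal$. From (\ref{Ibar0}) and (\ref{Hbar}) one reads off directly that $\bar I(X,Y)=\bar H-\sum_{i=0}^m\bar I_i=-N^{-1}\sum_{k=1}^N a^\intercal V_k$. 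Thus $\sqrt N(\bar I(X,Y)-I(X,Y))$ is, up to sign, the centered and scaled sum of the i.i.d.\ scalars $a^\intercal V_k$, and $a^\intercal\Sigma a=\Var(a^\intercal V)$; everything reduces to the single random variable $a^\intercal V=\log f(Y)-\sum_{i=0}^m\mathbb{I}(X=i)\log f_i(Y)$.

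Next I would compute the mean. Using $\E[\log f(Y)]=\int_{\R^d}f\log f=-H(Y)$ and $\E[\mathbb{I}(X=i)\log f_i(Y)]=\int_{\R^d}g_i\log f_i=p_i\int_{\R^d}f_i\log f_i=-I_i$, together with (\ref{mutual}), gives $\E[a^\intercal V]=-H(Y)+\sum_{i=0}^m I_i=-I(X,Y)$, so $\bar I(X,Y)$ is exactly unbiased for $I(X,Y)$. Under the regularity conditions on $f$ and $\{f_i\}$ that make the entries of $\Sigma$ finite (so that $\E(a^\intercal V)^2<\infty$), the classical Lindeberg--L\'evy central limit theorem applied to the i.i.d.\ sum $-N^{-1}\sum_k a^\intercal V_k$ yields $\sqrt N(\bar I(X,Y)-I(X,Y))\to N\big(0,\Var(a^\intercal V)\big)=N(0,a^\intercal\Sigma a)$, which is (\ref{clt1}).

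The explicit expression (\ref{sigma}) is then obtained by expanding $\Var(a^\intercal V)=\Var\big(\log f(Y)\big)+\Var(S)-2\,\mathrm{Cov}\big(\log f(Y),S\big)$, where $S=\sum_{i=0}^m\mathbb{I}(X=i)\log f_i(Y)$, and using the orthogonality relations $\mathbb{I}(X=i)\mathbb{I}(X=j)=0$ for $i\neq j$ and $\mathbb{I}(X=i)^2=\mathbb{I}(X=i)$. These give $\E[S^2]=\sum_i p_iE_i[\log f_i(Y)]^2$, $\E[S]=\sum_i p_iE_i[\log f_i(Y)]$, and $\E[\log f(Y)\,S]=\sum_i p_iE_i[\log f_i(Y)\log f(Y)]$; splitting $(\E[S])^2$ into its $i=j$ and $i<j$ parts and collecting terms produces (\ref{sigma}). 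This step is purely algebraic.

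Finally, for the characterization of positivity: $a^\intercal\Sigma a=\Var(a^\intercal V)\ge 0$, with equality exactly when $a^\intercal V$ is almost surely a constant $c$. If $X$ and $Y$ are independent, then $f_i=f$ Lebesgue-a.e.\ for every $i$, and since $\sum_{i=0}^m\mathbb{I}(X=i)=1$ we get $a^\intercal V=\log f(Y)-\log f(Y)=0$ surely, so $a^\intercal\Sigma a=0$. Conversely, if $a^\intercal V=c$ a.s., conditioning on each event $\{X=i\}$ (of probability $p_i>0$) gives $f(y)=e^{c}f_i(y)$ for $f_i$-a.e.\ $y$; substituting into $f=\sum_i p_if_i$ and using the standing assumption that each $f_i$ is positive wherever $f$ is positive forces $e^{-c}\sum_{i=0}^m p_i=1$, hence $c=0$, hence $f_i=f$ a.e.\ for every $i$, i.e.\ $X$ and $Y$ are independent. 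The main point needing care is precisely this last implication: the naive argument only yields $f_i=f$ on the support of $f_i$, and without the support/positivity condition a disjoint-support configuration can make $a^\intercal V$ constant even when $X$ and $Y$ are dependent; one must invoke that condition and verify the normalization pinning down $c=0$.
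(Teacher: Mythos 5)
Your proposal is correct and follows essentially the same route as the paper: recognize $\bar I(X,Y)$ as (minus) the sample mean of the i.i.d.\ scalars $a^\intercal V_k$, apply the classical CLT, expand the variance algebraically, and characterize $a^\intercal\Sigma a=0$ by the a.s.\ constancy of $a^\intercal V$. Two of your variations are cosmetic: you invoke the one-dimensional Lindeberg--L\'evy theorem for $a^\intercal V_k$ where the paper states the multivariate CLT for $(\bar H,\bar I_0,\dots,\bar I_m)^\intercal$ and then contracts with $a$, and you compute the variance of $S=\sum_i\mathbb{I}(X=i)\log f_i(Y)$ by indicator orthogonality where the paper uses the law of total variance; the resulting expression (\ref{sigma}) is the same. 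The one substantive difference is in the ``only if'' direction of the positivity claim. The paper passes from $\sum_i\mathbb{I}(X=i)\log\big(f(Y)/f_i(Y)\big)\equiv C$ to $f_i=e^{-C}f$ everywhere and then normalizes via $f=\sum_ip_if_i$ to force $C=0$; but the a.s.\ identity conditional on $X=i$ only constrains $y$ in the support of $f_i$, so the normalization step implicitly assumes each $f_i$ is positive wherever $f$ is. You flag exactly this and supply the missing hypothesis, and your caution is warranted: with $m=1$, $p_0=p_1=1/2$, $f_0=\mathbb{I}_{[0,1]}$ and $f_1=\mathbb{I}_{[2,3]}$, one gets $a^\intercal V\equiv\log(1/2)$ even though $X$ and $Y$ are maximally dependent, so the equivalence as stated genuinely requires a common-support condition. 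On this point your write-up is slightly more careful than the paper's; everything else matches.
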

\begin{proof}
First of all, $a^\intercal\Sigma a\ge 0$ since $\Sigma$ is the variance covariance matrix. If $a^\intercal\Sigma a=0$ then 
\begin{equation*}
var\left(\log f(Y)-\sum_{i=0}^m \mathbb{I}(X=i)\log f_i(Y)\right)=a^\intercal\Sigma a=0
\end{equation*}
and  $\log f(Y)-\sum_{i=0}^m \mathbb{I}(X=i)\log f_i(Y)\equiv C$  for some constant $C$.  But 
\begin{equation*}
\log f(Y)-\sum_{i=0}^m \mathbb{I}(X=i)\log f_i(Y)
=\sum_{i=0}^m \mathbb{I}(X=i)\log \frac{f(Y)}{f_i(Y)}.
\end{equation*}
Hence $\log \frac{f(Y)}{f_i(Y)}\equiv C$. Then  $f_i(y)=cf(y)$ for some constant $c>0$ and for all $0\le i\le m$. But 
$f(y)=\sum_{i=0}^m p_if_i(y)=cf(y)\sum_{i=0}^m p_i=cf(y)$. Hence, $c\equiv 1$ and  $f_i(y)=f(y)$ for all $0\le i\le m$. Then $X$ and $Y$ are independent.
On the other hand, if $X$ and $Y$ are independent, then $f_i(y)=f(y)$  for all $0\le i\le m$. Therefore, $\log f(Y)-\sum_{i=0}^m \mathbb{I}(X=i)\log f_i(Y)=0$ and $a^\intercal\Sigma a=0$. Hence, $a^\intercal\Sigma a=0$ if and only if $X$ and $Y$ are independent.

Notice that the vector $(\bar{H}(Y), \bar{I}_0,\cdots, \bar{I}_m)^\intercal$ is the sample mean of a sequence of i.i.d. random vectors $$\left\{(\log f(Y_k), \mathbb{I}(X_k=0)\log f_0(Y_k), \cdots, \mathbb{I}(X_k=m)\log f_m(Y_k))^\intercal\right\}_{k=1}^N$$ with mean $(H(Y), I_0,\cdots, I_m)^\intercal$. Then, by central limit theorem,  we have 
$$\sqrt{N}\left (\begin{pmatrix} \bar{H} \\ \bar{I}_0 \\ \vdots\\ \bar{I}_m\end{pmatrix}-\begin{pmatrix} H \\ I_0 \\ \vdots\\ I_m\end{pmatrix}\right )\to N( \bar{0},\Sigma ),$$
and, given $a^\intercal\Sigma a>0$, we have (\ref{clt1}). 
By the formula for variance decomposition, we have 
\begin{eqnarray}\label{varI0}
\begin{split}
&var\big(\mathbb{I}(X=i)\log f_i(Y)\big)\\
&= E\big\{var[\mathbb{I}(X=i)\log f_i(Y)|X]\big\}+var\big\{E[\mathbb{I}(X=i)\log f_i(Y)|X]\big\}\\
&= E\big\{\mathbb{I}(X=i) var[\log f_i(Y)|X]\big\}+var\big\{\mathbb{I}(X=i)E[\log f_i(Y)|X]\big\}\\
&= E\big\{\mathbb{I}(X=i) \sum_{j=0}^m var_j(\log f_j(Y))\mathbb{I}(X=j)\big\}\\
&\;\;\;\;+var\big\{\mathbb{I}(X=i)\sum_{j=0}^m E_j(\log f_j(Y))\mathbb{I}(X=j)\big\}\\
&= var_i[\log f_i(Y)] E\big\{\mathbb{I}(X=i) \big\}+\big(E_i[\log f_i(Y)]\big)^2var\big\{\mathbb{I}(X=i)\big\}\\
&= p_i var_i[\log f_i(Y)]+(p_i-p_i^2)\big(E_i[\log f_i(Y)]\big)^2\\
&=p_i E_i[\log f_i(Y)]^2-p_i^2\big(E_i[\log f_i(Y)]\big)^2,
\end{split}
\end{eqnarray}
$0\le i\le m$. Here $var_i$ is the conditional variance of $Y$ when $X=i, 0\le i\le m$.
By similar calculation,
\begin{eqnarray}\label{cov1}
\begin{split}
&Cov\bigg(\mathbb{I}(X=i)\log f_i(Y),\mathbb{I}(X=j)\log f_j(Y)\bigg)\\
&=-p_ip_j[E_i\log f_i(Y)][E_j\log f_j(Y)],
\end{split}
\end{eqnarray}
for all $0\le i<j\le m$, and 
\begin{equation}\label{cov2}
\begin{split}
&Cov\bigg(\mathbb{I}(X=i)\log f_i(Y),\log f(Y)\bigg)\\
&=p_i[E_i\log f_i(Y)\log f(Y)-E_i\log f_i(Y) E\log f(Y)].
\end{split}
\end{equation}
Thus, the covariance matrix $\Sigma$ 
of $(\log f(Y), \mathbb{I}(X=0)\log f_0(Y), \cdots,\mathbb{I}(X=m)\log f_m(Y))^\intercal$ and therefore $a^\intercal\Sigma a$ can be calculated by the above calculation (\ref{varI0})-(\ref{cov2}). We then have (\ref{sigma}). 
\end{proof}

We consider the case when the random variables $X$ and $Y$ are dependent. Note that in this case $a^\intercal\Sigma a>0$ and we have \eqref{clt1}. 
However, $\bar{I}(X,Y)$ is not a practical estimator since the density functions involved are not known. 

Now let $K(\cdot)$ be a kernel function in $\mathbb{R}^d$ and let $h$ be the bandwidth. Then
$$\hat{f}_{ik}(y)=\bigg\{(N\hat{p}_i-1)h^d\bigg\}^{-1}\sum\limits_{j\neq k}\mathbb{I}(X_j=i)K\{(y-Y_j)/h\}$$
are the ``leave-one-out" estimators of the functions $f_i$, $0\le i\le m$, and  
\begin{equation}\label{Ihat0}
\begin{split}
&\hat{I}_i=- N^{-1}\sum\limits_{k=1}^{N}\mathbb{I}(X_k=i)\log \hat{f}_{ik}(Y_k) \\
\end{split}
\end{equation}
are estimators of $I_i=p_iH(Y|X=i)$, $0\le i\le m$. Also 
\begin{equation}\label{Hhat}
\begin{split}
&\hat{H}=- N^{-1}\sum\limits_{k=1}^{N}\log \hat{f}_k(Y_k) \\
\end{split}
\end{equation}
is an estimator of $H(Y)$, where
\begin{equation}\label{fhat}
\begin{split}
&\hat{f}_k(y)=\bigg\{(N-1)h^d\bigg\}^{-1}\sum\limits_{j\neq k}K\{(y-Y_j)/h\} \\
&=\bigg\{(N-1)h^d\bigg\}^{-1}\sum\limits_{j\neq k}[\sum_{i=0}^m\mathbb{I}(X_k=i)]K\{(y-Y_j)/h\}\\
&= \sum_{i=0}^m\frac{N\hat{p}_i-1}{N-1}\hat{f}_{ik}(y).
\end{split}
\end{equation}
\begin{theorem}
Assume that the tails of $f_0, \cdots, f_m$ are decreasing like $|x|^{-\alpha_0}, \cdots, |x|^{-\alpha_m}$ , respectively, as $|x|\to\infty$. 
Also assume that the kernel function has appropriately heavy tails  as in  \cite{Hall1}. If $h=o(N^{-1/8})$ and $\alpha_0\cdots, \alpha_m$ are all greater than $7/3$ in the case $d=1$, greater than $6$ in the case $d=2$ and greater than $15$ in the case $d=3$, then for the estimator 
\begin{equation}\label{Ihat}
\hat{I}(X,Y)=\hat{H}-\sum_{i=0}^m\hat{I}_i, 
\end{equation}
we have
\begin{equation}\label{clt2}
\sqrt{N}(\hat{I}(X,Y)-I(X,Y))\to N(0,a^\intercal\Sigma a).
\end{equation}
\end{theorem}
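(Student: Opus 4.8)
The plan is to reduce the central limit theorem for $\hat I(X,Y)$ to the one already established for $\bar I(X,Y)$ in Theorem \ref{thm1}. By Slutsky's theorem it suffices to show that
\begin{equation*}
\sqrt{N}\bigl(\hat I(X,Y)-\bar I(X,Y)\bigr)\to 0
\end{equation*}
in probability. Writing $\hat I(X,Y)-\bar I(X,Y)=(\hat H-\bar H)-\sum_{i=0}^m(\hat I_i-\bar I_i)$, this amounts to controlling each of the $m+2$ differences separately, so the whole problem decomposes into showing that $\sqrt N(\hat H-\bar H)\to 0$ and $\sqrt N(\hat I_i-\bar I_i)\to 0$ in probability for each fixed $i$. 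Both have the same structure: they are $N^{-1/2}\sum_k \mathbb{I}(X_k=i)\bigl[\log\hat f_{ik}(Y_k)-\log f_i(Y_k)\bigr]$ (with the trivial indicator for the $\hat H$ term), i.e. averages of the logarithmic error of a leave-one-out kernel density estimate evaluated at the sample points.

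The core of the argument is therefore a single kernel-estimation lemma: under the stated tail conditions on $f_i$, the matching heavy-tail condition on $K$, and the bandwidth choice $h=o(N^{-1/8})$, one has $N^{-1/2}\sum_{k=1}^N\bigl[\log\hat f_k(Y_k)-\log f(Y_k)\bigr]\to 0$ in probability, and likewise for the conditional versions. This is exactly the type of statement proved by Hall and Morton \cite{Hall1} for the kernel estimator of differential entropy; the strategy there is to expand $\log\hat f_k=\log f+(\hat f_k-f)/f - \tfrac12(\hat f_k-f)^2/f^2+\cdots$, and then bound the bias term, the linear (variance) term, and the quadratic remainder. The bias of $\hat f_k$ contributes at order $h^2$ (for a second-order kernel), so the bias contribution to the sum is $O(\sqrt N\,h^2)=o(N^{1/2}N^{-1/4})$—wait, more carefully one needs $\sqrt N h^2\to 0$, which holds since $h=o(N^{-1/8})$ gives $h^2=o(N^{-1/4})$ and hence $\sqrt N h^2=o(N^{1/4})$; in fact the sharper accounting in \cite{Hall1} shows the relevant requirement is met by $h=o(N^{-1/8})$ once the quadratic term, which carries a factor like $N^{-1}h^{-d}$ inside a sum of $N$ terms, is included—this is where the dimension-dependent exponent thresholds ($d=1,2,3$ requiring $h=o(N^{-1/8})$ together with the tail orders $7/3$, $6$, $15$) enter. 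The heavy-tail assumptions on $f_i$ and $K$ are what make $\E\bigl[1/f(Y)\bigr]$, $\E\bigl[1/f(Y)^2\bigr]$ and the corresponding moments of $1/\hat f_k$ finite, so that the Taylor expansion of the logarithm is legitimate and its remainder is integrable near the tails, where $\hat f_k$ is small.

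The main obstacle is precisely this tail control: the integrand $\log\hat f_{ik}(Y_k)$ can blow up when $Y_k$ lies far out in the tail and the leave-one-out estimate $\hat f_{ik}(Y_k)$ underestimates $f_i$ severely. Handling it requires (i) truncating to the event where all $\hat f_{ik}(Y_k)$ stay within a constant factor of $f_i(Y_k)$ and showing the complementary event contributes negligibly, using the polynomial tail decay of $f_i$ and the matching tail of $K$ to get a usable lower bound on $\hat f_{ik}$; and (ii) bookkeeping the dependence among the $N$ summands introduced by the leave-one-out construction—this is typically dispatched by a $U$-statistic / Hoeffding-type decomposition, showing the degenerate part is $o_p(N^{-1/2})$. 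Once the kernel lemma is in hand for $\hat H$, the conditional statements for $\hat I_i$ follow by the same argument applied on the subsample $\{k:X_k=i\}$, noting $N\hat p_i/N\to p_i>0$ so the effective sample size is of order $N$; the representation $\hat f_k=\sum_i\frac{N\hat p_i-1}{N-1}\hat f_{ik}$ in \eqref{fhat} lets one treat $\hat f_k$ and the $\hat f_{ik}$ on the same footing. Assembling the $m+2$ negligibility statements and invoking Theorem \ref{thm1} then yields \eqref{clt2} with the same asymptotic variance $a^\intercal\Sigma a$.
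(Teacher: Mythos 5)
Your proposal is correct and follows essentially the same route as the paper: reduce \eqref{clt2} to Theorem \ref{thm1} by showing $\hat H=\bar H+o_p(N^{-1/2})$ and $\hat I_i=\bar I_i+o_p(N^{-1/2})$ for each $i$, where the key negligibility statement is exactly the entropy-estimation expansion of Hall and Morton (the paper invokes their formulas (3.1)--(3.2) from \cite{Hall}, with the tail conditions coming from \cite{Hall1}). Your additional sketch of how that expansion is established (Taylor expansion of the logarithm, bias/variance/quadratic bookkeeping, tail truncation) is detail the paper simply delegates to the citation.
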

\begin{proof}
Under the conditions in the theorem,  applying the formula (3.1) or (3.2) from \cite{Hall}, we have
 $$\hat{H}=\bar{H}+o(N^{-1/2}), \quad \hat{I}_0=\bar{I}_0+o(N^{-1/2}), \cdots, \quad  \hat{I}_m=\bar{I}_m+o(N^{-1/2}).$$
Together with Theorem \ref{thm1}, we have (\ref{clt2}). 
\end{proof}

We may take the probability density function of Student-$t$ distribution with proper degree of freedom instead of the normal density function as the kernel function. On the other hand, 
if $X$ and $Y$ are independent then $I(X,Y)=\bar{I}(X,Y)=0$ and we have that $\hat{I}(X,Y)=o(N^{-1/2})$.

\section{Simulation study}
In this section we conduct a simulation study with $m=1$, i.e., the random variable $X$ takes two possible values 0 and 1, to confirm the main results stated in (\ref{clt2}) for the kernel mutual information estimation of good mixed-pairs. First we study some one dimensional examples. Let $t(\nu, \mu, \sigma)$ be the Student t distribution with degree of freedom $\nu$, location parameter $\mu$ and scale parameter $\sigma$ and let $pareto(x_m, \alpha)$ be the Pareto distribution with density function $f(x)=\alpha x_m^\alpha x^{-(\alpha+1)}\mathbb{I}(x\ge x_m)$.  We study the mixture for the following four cases: 1). $t(3, 0, 1)$ and $t(12, 0, 1)$;   2).  $t(3, 0, 1)$ and $t(3, 2, 1)$;  3). $t(3, 0, 1)$ and $t(3, 0, 3)$; 4). $pareto(1,2)$ and $pareto(1,10)$. For each case, $p_0=0.3$ for the first distribution and $p_1=0.7$ for the second distribution.  

The second row of Table \ref{tab}  lists the mathematica calculation of the mutual information (MI) as stated in (\ref{mutual})  for each case. The third row of Table \ref{tab} gives the average of 400 estimates based on formula (\ref{Ihat}). For each estimate, we use the probability density function of the Student t distribution with degree of freedom 3, i.e. $t(3, 0, 1)$, as the kernel function. We also have simulation study with kernel functions satisfying the conditions in the main results and obtained similar results. We take $h=N^{-1/5}$ as the bandwidth for the first three cases and $h=N^{-1/5}/24$ for the last case. The data size for each estimate is $N=50,000$ in each case. The Pareto distributions $pareto(1,2)$ and $pareto(1,10)$ have very dense area on the right of 1. This is the reason that we take a relatively small bandwidth for this case. To apply the kernel method in estimation, one should select an optimal bandwidth based on some criteria, for example, to minimize the mean squared error.  It is interesting to investigate the bandwidth selection problem from both theoretical and application viewpoints. However, it seems that the study in this direction is very difficult. We leave it as an open question for future study. It is clear that the average of the estimates matches the true value of mutual information. 

We apply mathematica to calculate the covariance matrix $\Sigma$
of 
$$(\log f(Y), \mathbb{I}(X=0)\log f_0(Y),\mathbb{I}(X=1)\log f_1(Y))^\intercal$$
 and, therefore, the value of $a^\intercal \Sigma a$ for each case by formulae  (\ref{varI0})-(\ref{cov2}) or (\ref{sigma}). The values of $a^\intercal \Sigma a$ are $0.02189236$, $0.3092179$, $0.1540501$ and $0.2748102$ respectively for the four cases. 
The fourth row of Table \ref{tab} lists the values of $(a^\intercal \Sigma a/N)^{1/2}$ which serves as the asymptotic approximation of the standard deviation of the estimator $\hat{I}(X,Y)$ in the central limit theorem (\ref{clt2}). The last row gives the sample standard deviation from $M=400$ estimates. These two values also have good match.
 \begin{table}[H]
\center
\bigskip
\begin{tabular}{|l |c|c|c|c|c}
\hline\hline                                                     
\;\;mixture   &$t(3, 0, 1)$    &$t(3, 0, 1)$   &  $t(3, 0, 1)$  &  $pareto(1,2)$ \\
                   &$t(12, 0, 1)$  &$t(3, 2, 1)$   & $t(3, 0, 3)$   & $pareto(1,10)$\\ 
\hline

\;\;MI           & 0.011819 & 0.20023 & 0.102063 & 0.201123\\
\hline

\;\;mean of estimates & 0.01167391 & 0.1991132 & 0.1014199 & 0.2010447\\
\hline

\;\;$(a^\intercal\Sigma a/N)^{1/2}$   & 0.0006617 & 0.0025 & 0.0018 & 0.0023\\
\hline

\;\;sample sd          & 0.0006616724 & 0.002345997 & 0.001819982 & 0.002349275\\

\hline\hline
\end{tabular}  
\caption{ True value of the mutual information and the mean value of the estimates.} 
\label{tab}
\end{table}

%
\begin{figure}[H]
\centering
\begin{tabular}{cc}
\includegraphics[width=0.33\linewidth]{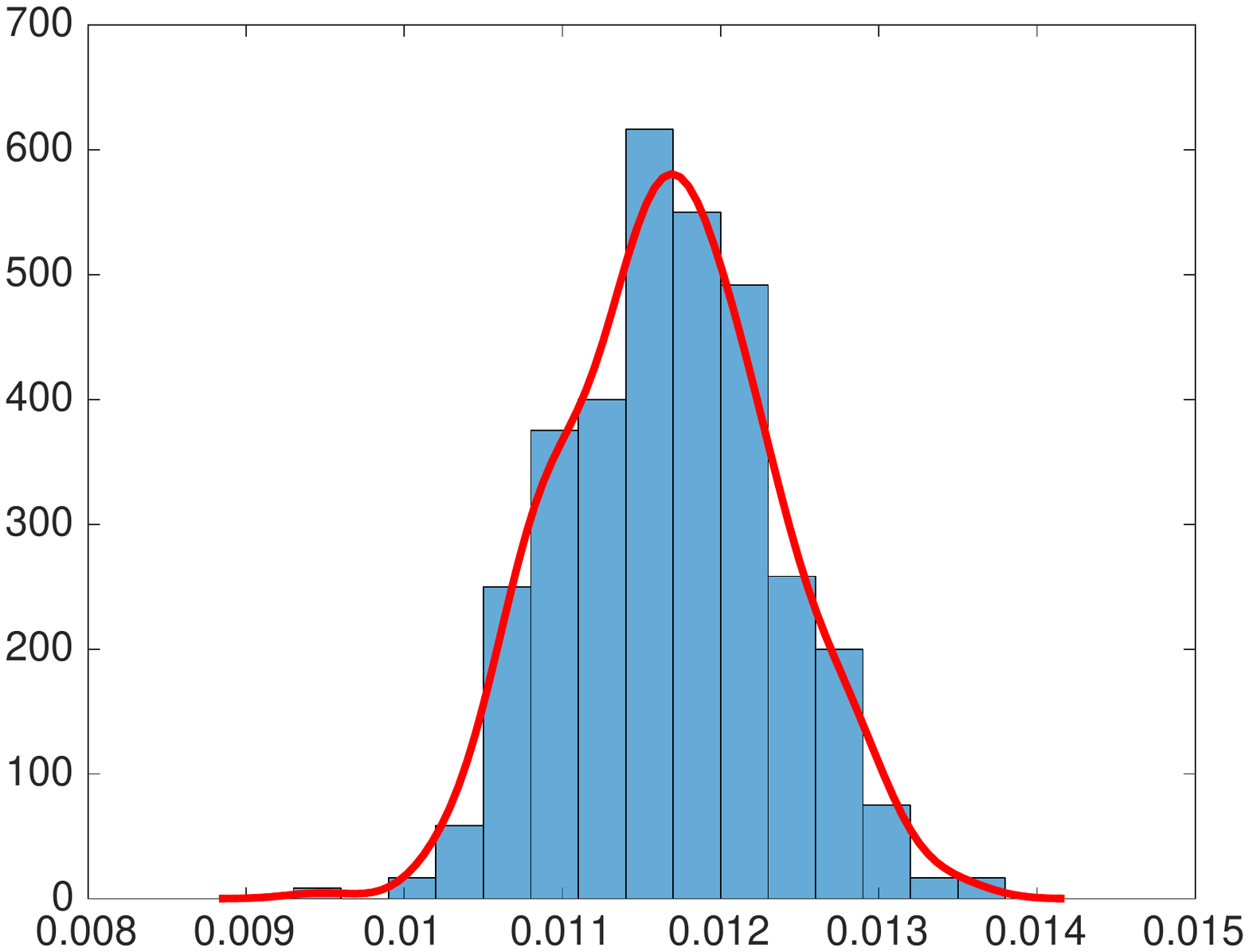} &
\includegraphics[width=0.33\linewidth]{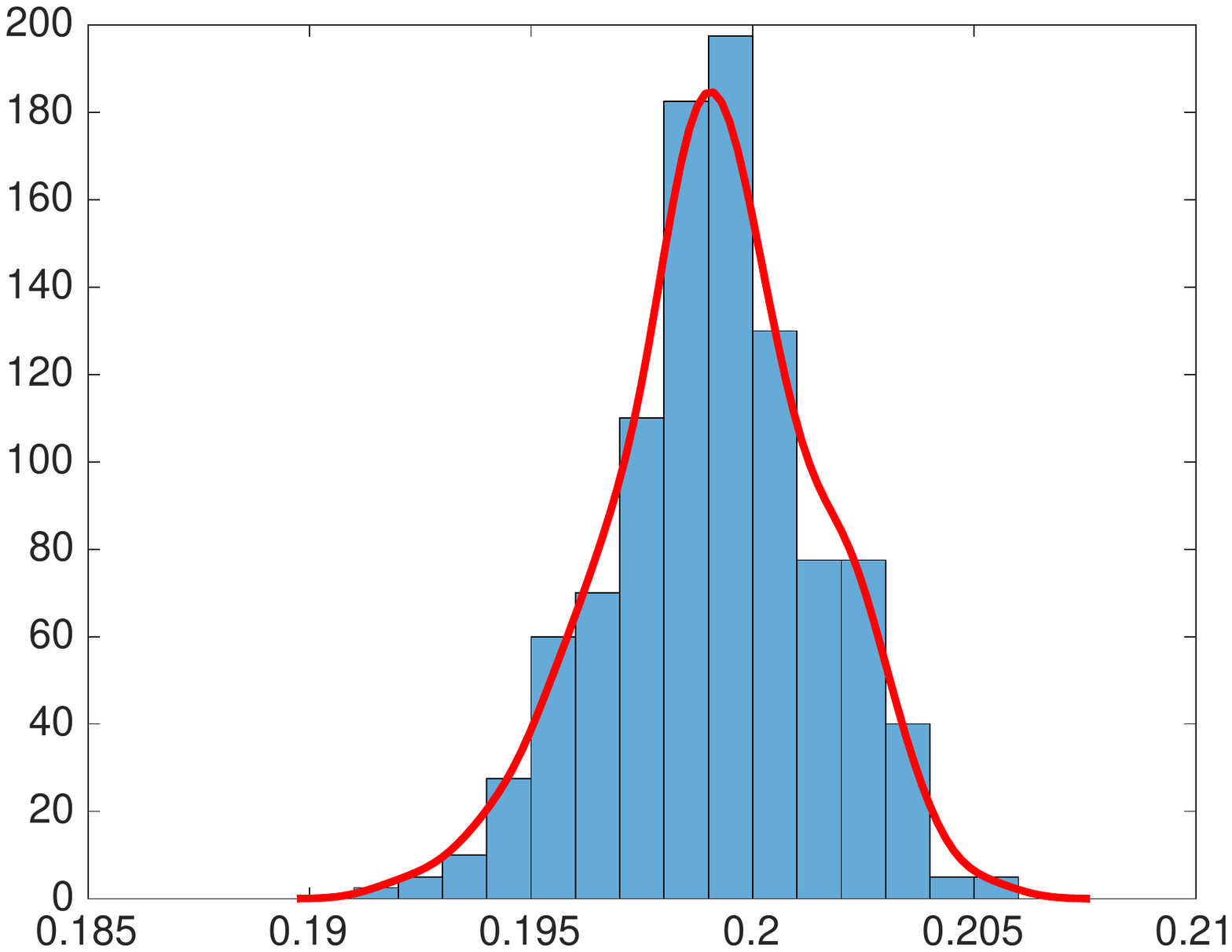}\vspace{-3cm}\\
\includegraphics[width=0.33\linewidth]{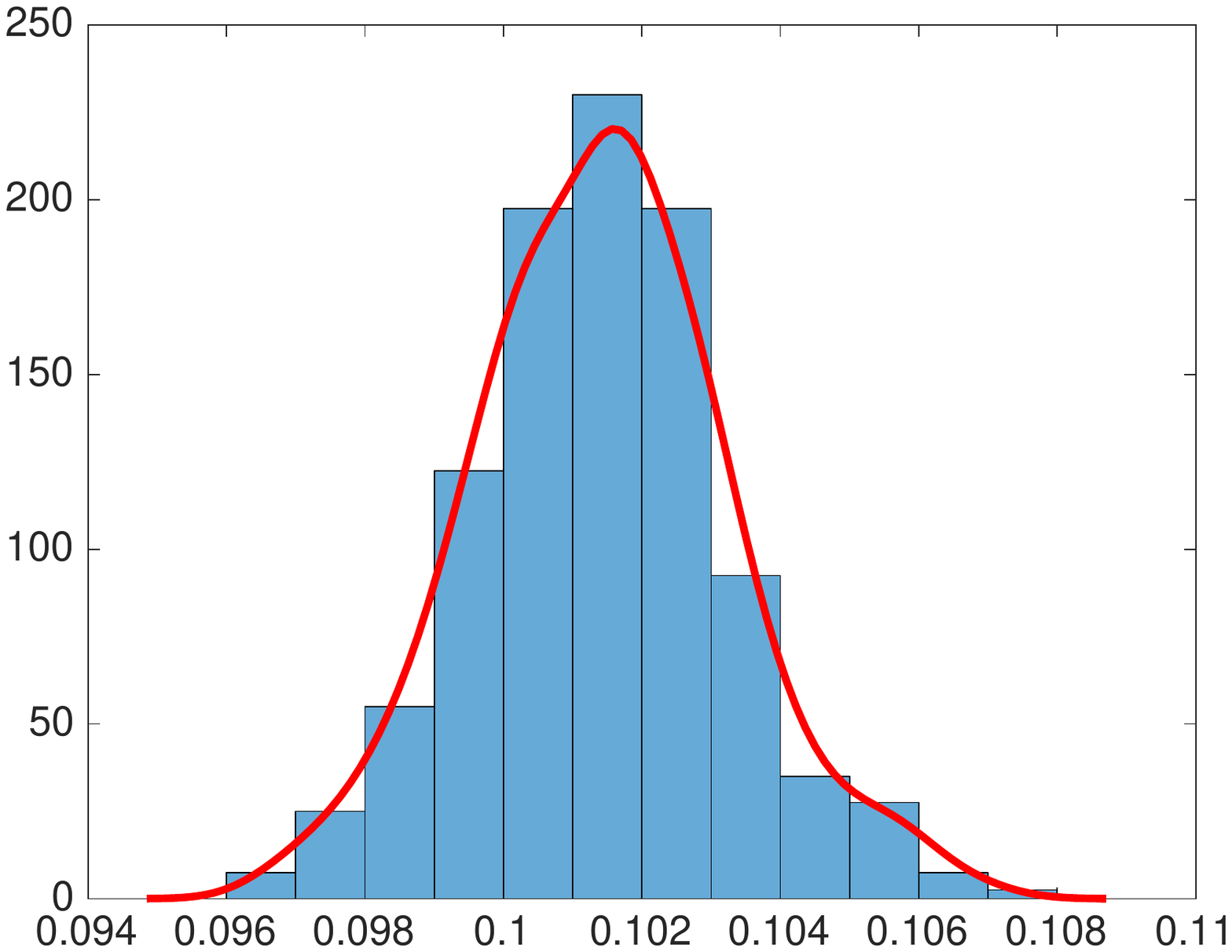} &
\includegraphics[width=0.33\linewidth]{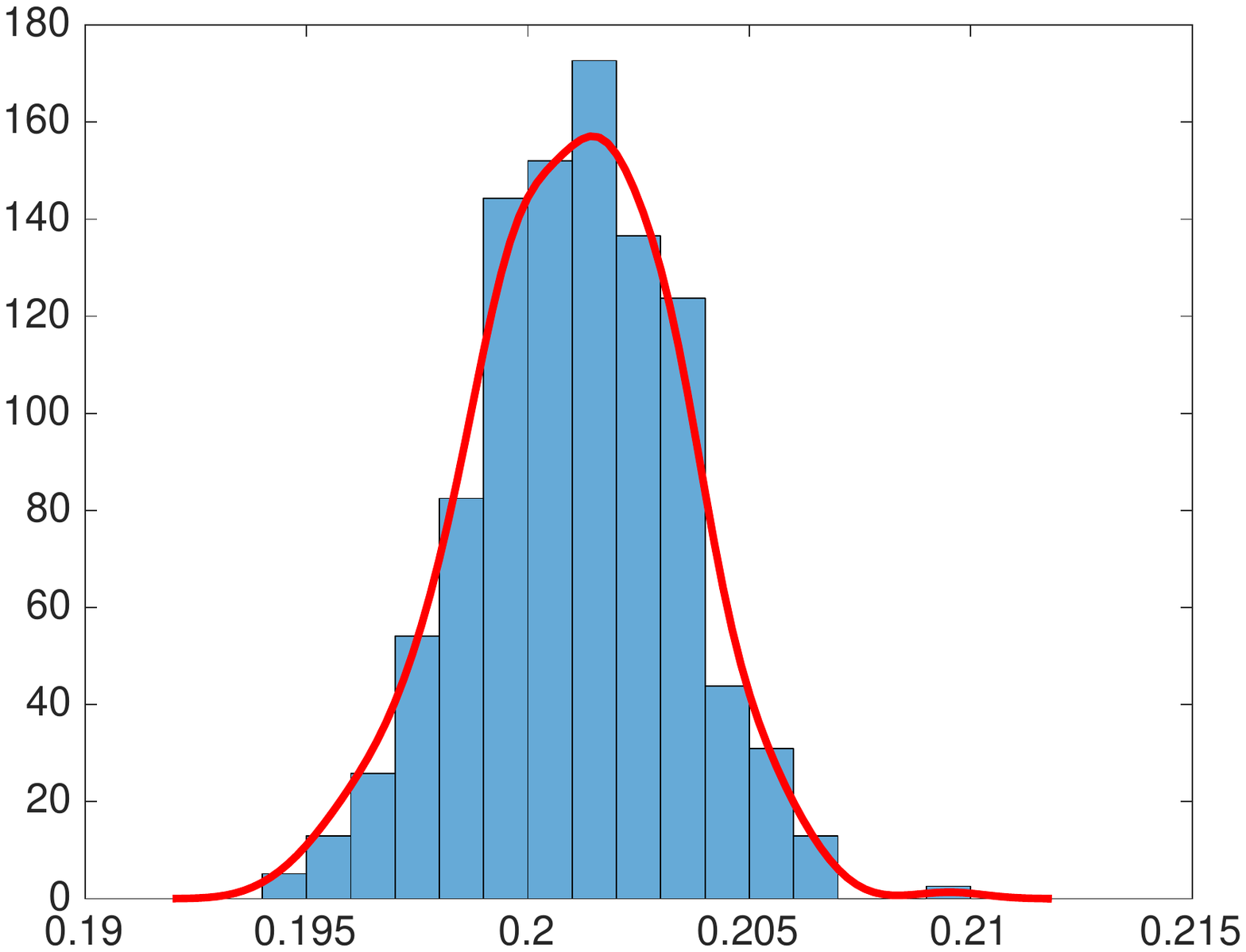}\vspace{-1cm}\\
\end{tabular}
\caption {The histograms with kernel density fits of $M=400$ estimates. Top left: $t(3,0,1)$ and $t(12,0,1)$. Top right: $t(3,0,1)$ and $t(3,2,1)$. 
Bottom left: $t(3,0,1)$ and $t(3,0,3)$. Bottom right: $pareto(1,2)$ and $pareto(1,10)$. \label{fig:1}}
\end{figure}
\begin{figure}[H]
\centering
\begin{tabular}{cc}
\includegraphics[width=0.33\linewidth]{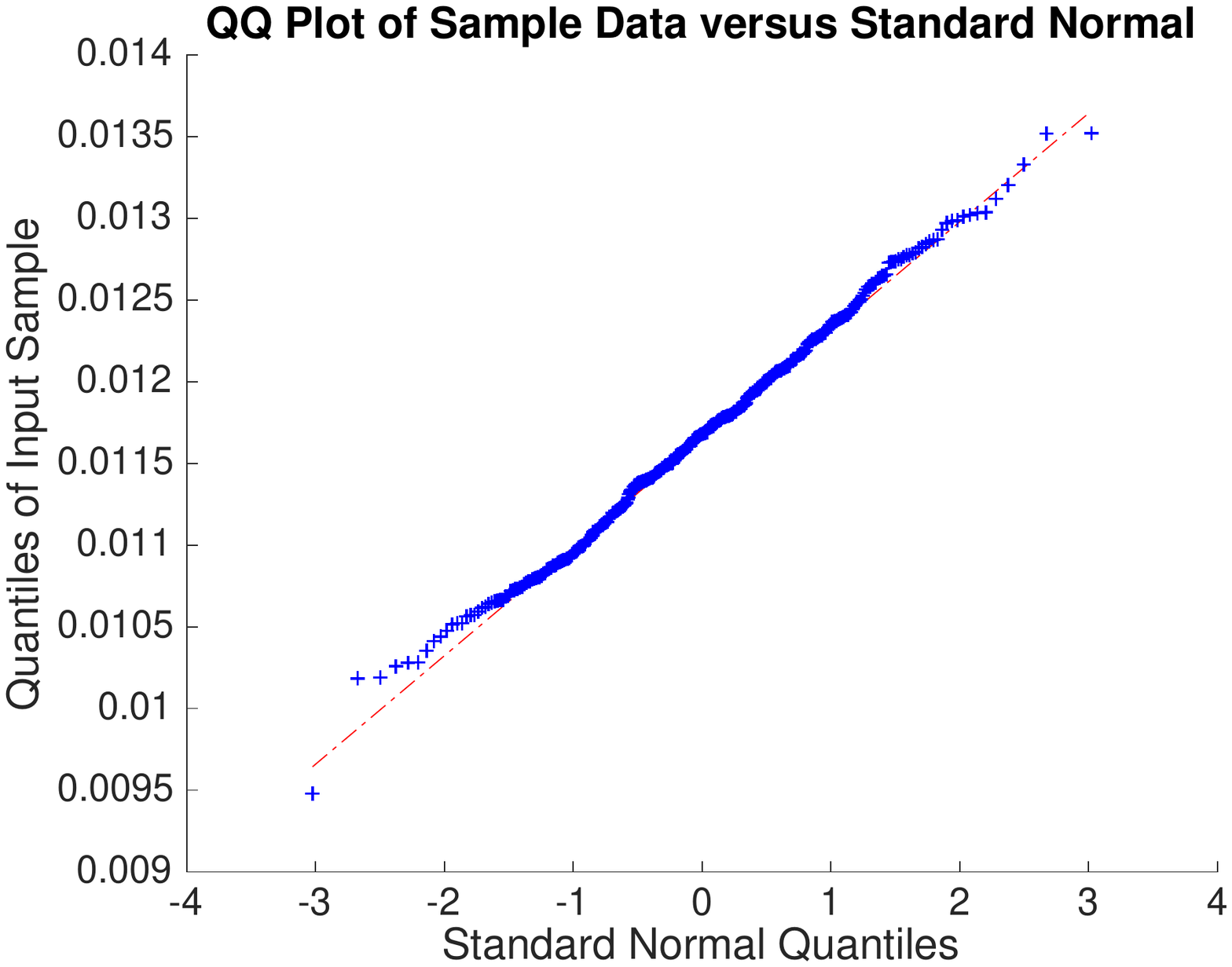}
\includegraphics[width=0.33\linewidth]{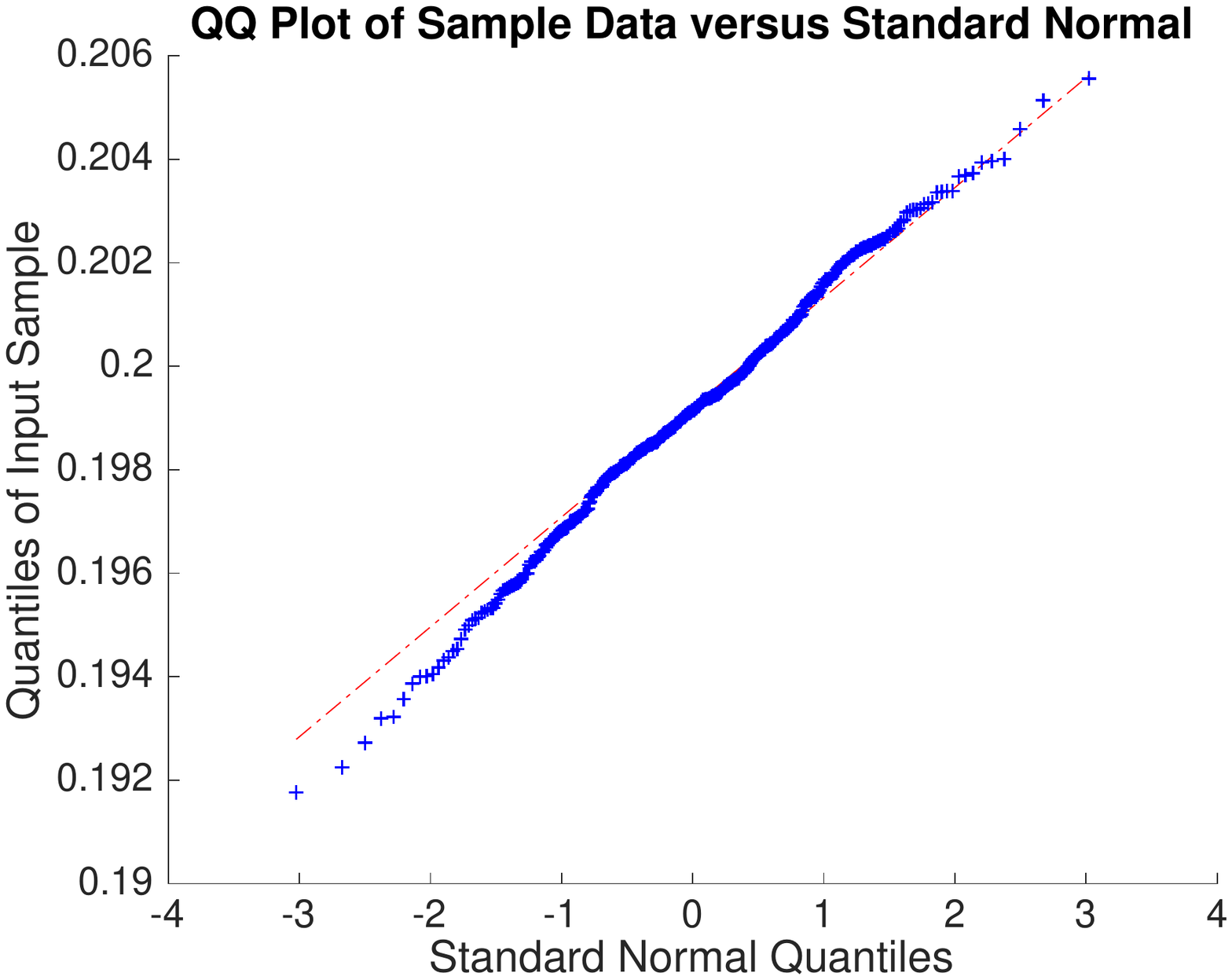}\vspace{-3cm}\\
\includegraphics[width=0.33\linewidth]{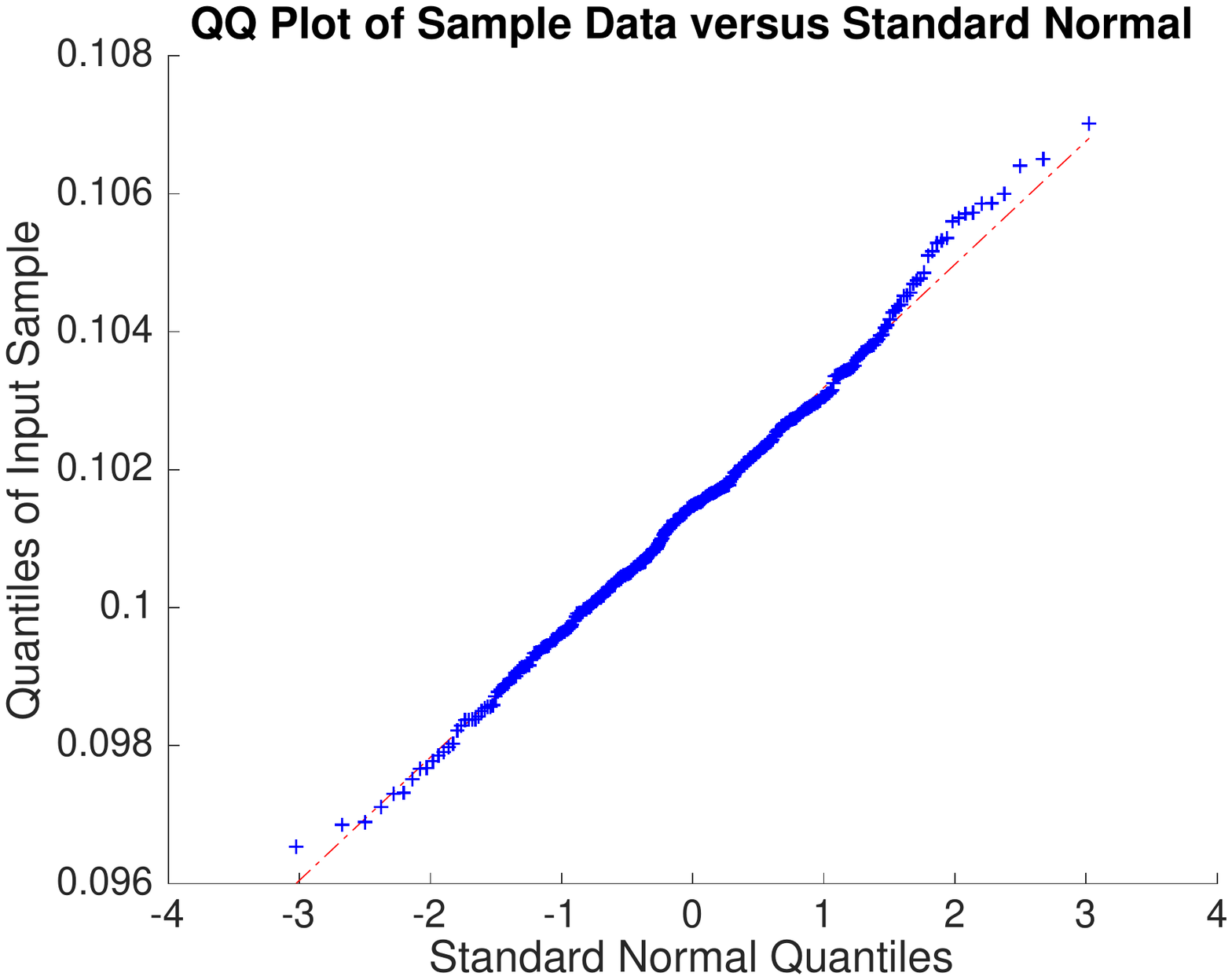}
\includegraphics[width=0.33\linewidth]{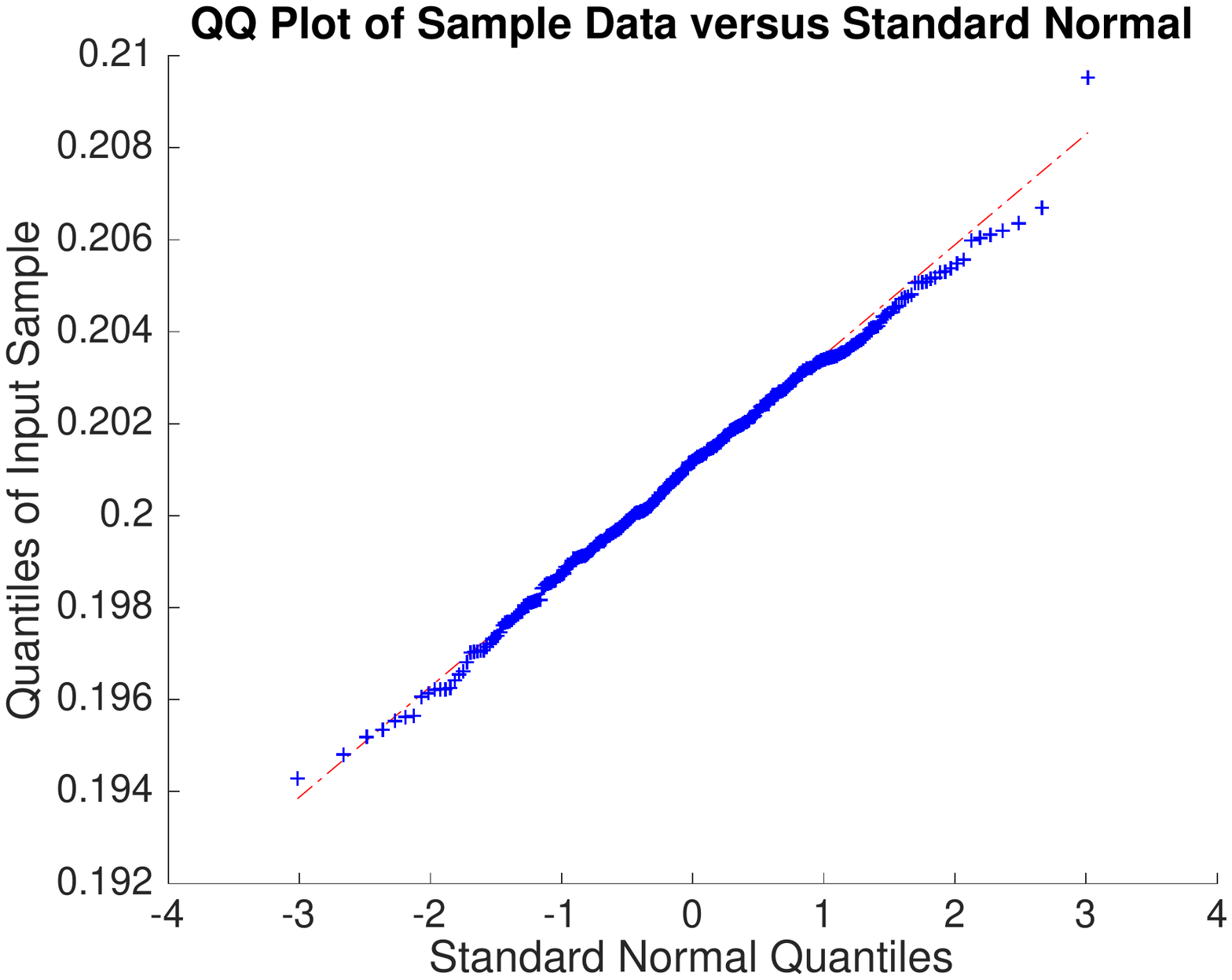}
\end{tabular}
\caption {The Q-Q plots of $M=400$ estimates. Top left: $t(3,0,1)$ and $t(12,0,1)$. Top right: $t(3,0,1)$ and $t(3,2,1)$. 
Bottom left: $t(3,0,1)$ and $t(3,0,3)$. Bottom right: $pareto(1,2)$ and $pareto(1,10)$.\label{fig:2}}
\end{figure}

 Figure \ref{fig:1} and \ref{fig:2} show the histograms with kernel density fits and normal Q-Q plots of 400 estimates for each case. It is clear that the values of $\hat{I}(X,Y)$ follow a normal distribution. 

We study two examples in the two dimensional case. Let $t_\nu(\mu,\Sigma_0)$ be the two dimensional Student t distribution with degree of freedom $\nu$, mean $\mu$ and shape matrix $\Sigma_0$. We study the mixture in two cases: 1). $t_5(0, I)$ and $t_{25}(0, I)$;   2).  $t_5(0, I)$ and $t_5(0, 3I)$. Here $I$ is the identity matrix. For each case, $p_0=0.3$ for the first distribution and $p_1=0.7$ for the second distribution. Table \ref{tab2} summarizes $200$ estimates of the mutual information with $h=N^{-1/5}$ and sample size $N=50,000$ for each estimate.  We take $t_3(0,I)$ as the kernel function. Same as the one dimensional case, we apply mathematica to calculate the true value of MI and $(a^\intercal\Sigma a/N)^{1/2}$ which is given in formula (\ref{sigma}).  Figure \ref{fig:3} shows the histograms with kernel density fits and normal Q-Q plots of 200 estimates for each example. It is clear that the values of $\hat{I}(X,Y)$ also follow a normal distribution in the two dimensional case. In summary, the simulation study confirms the central limit theorem as stated in (\ref{clt2}). 

 \begin{table}[H]
\center
\bigskip
\begin{tabular}{|l |c|c|c}
\hline\hline                                                     
\;\;mixture   &$t_5(0, I)$    &$t_5(0, I)$    \\
                   &$t_{25}(0, I)$  &$t_5(0, 3I)$   \\ 
\hline

\;\;MI           & 0.01158 & 0.202516 \\
\hline

\;\;mean of estimates & 0.0112381 & 0.2022715 \\
\hline

\;\;$(a^\intercal\Sigma a/N)^{1/2}$   & 0.0006577826 & 0.002312909 \\
\hline

\;\;sample sd          & 0.0008356947 & 0.002315134 \\

\hline\hline
\end{tabular}  
\caption{ True value of the mutual information and the mean value of the estimates.} 
\label{tab2}
\end{table}

\begin{figure}[H]
\centering
\begin{tabular}{cc}
\includegraphics[width=0.33\linewidth]{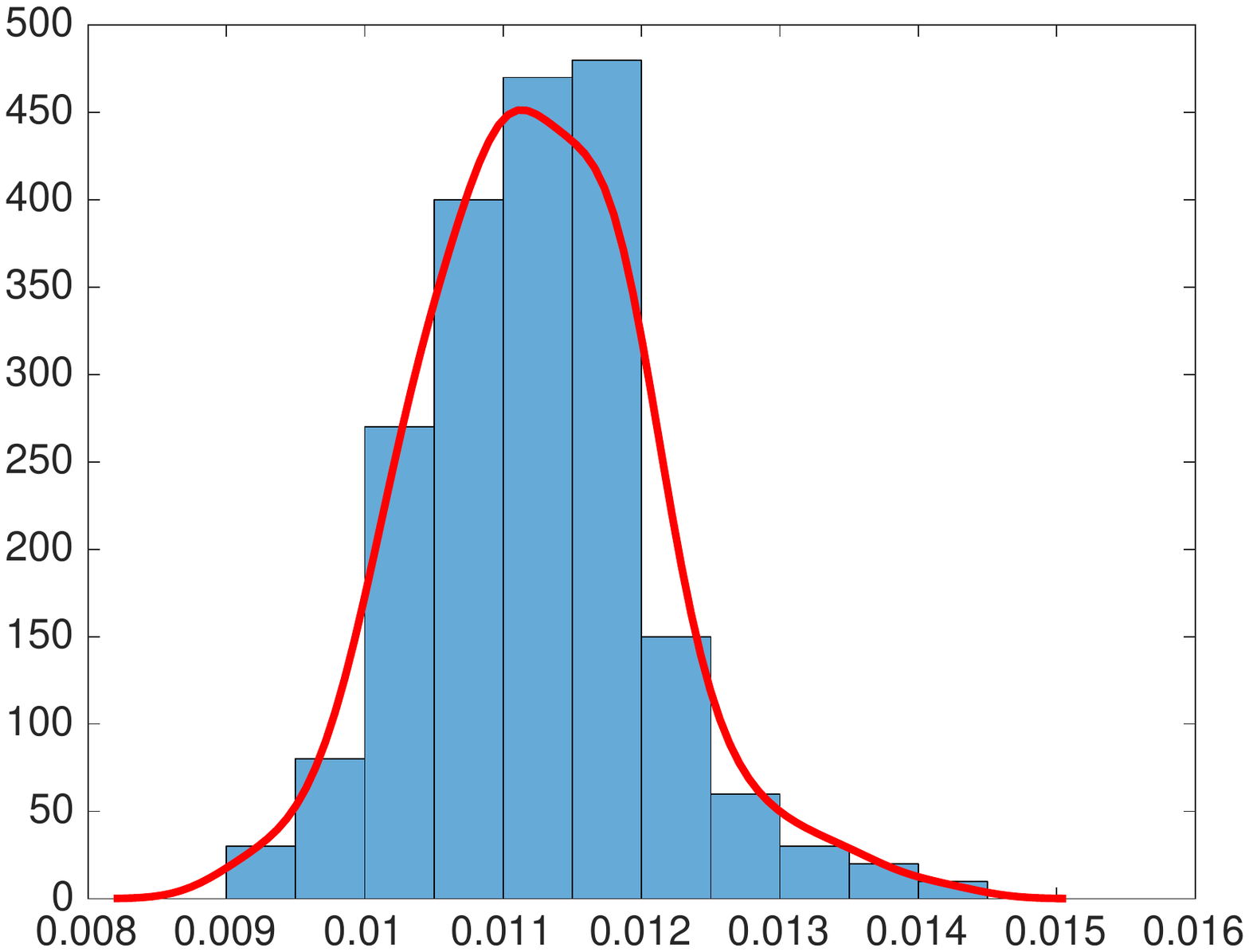}
\includegraphics[width=0.33\linewidth]{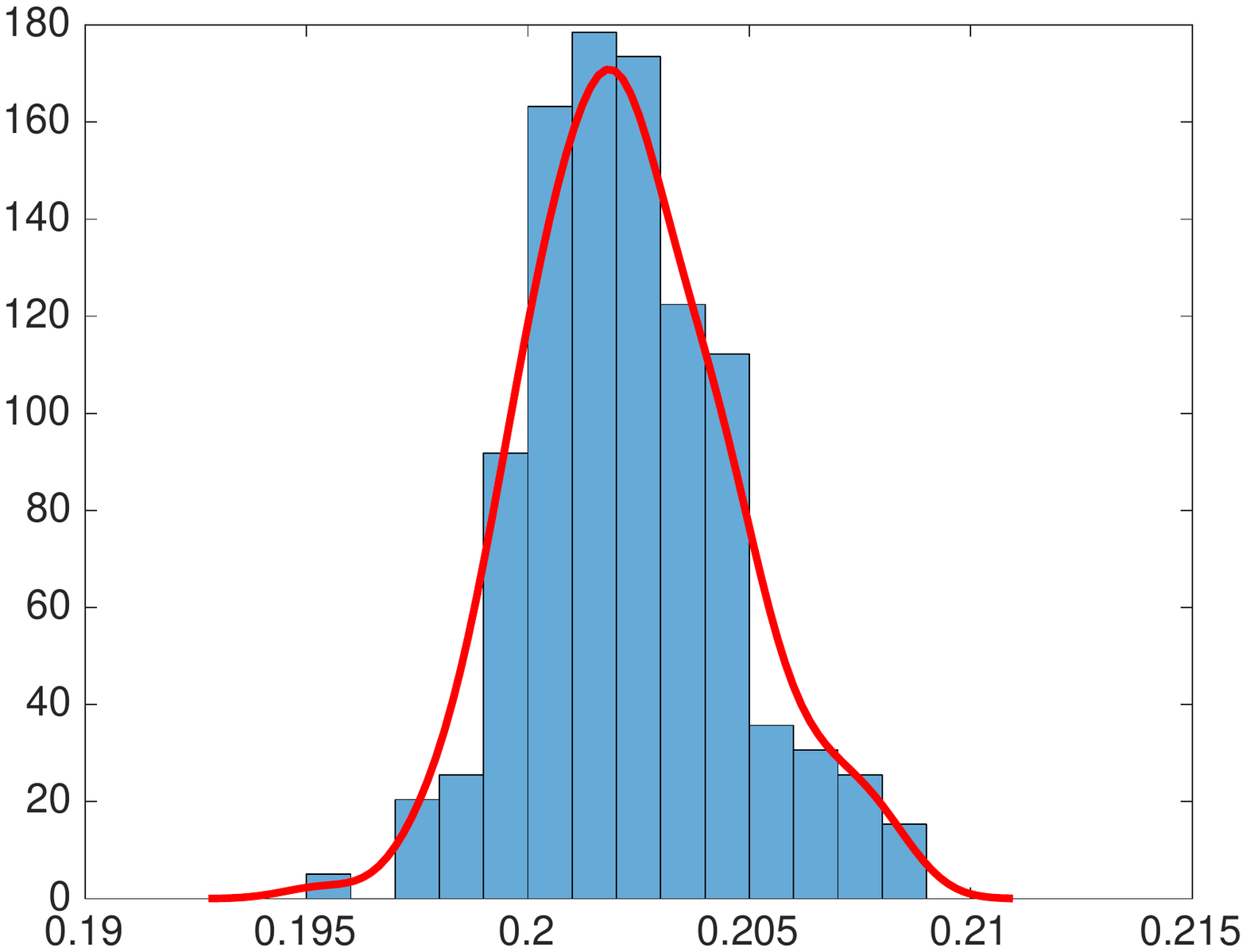}\vspace{-3cm}\\
\includegraphics[width=0.33\linewidth]{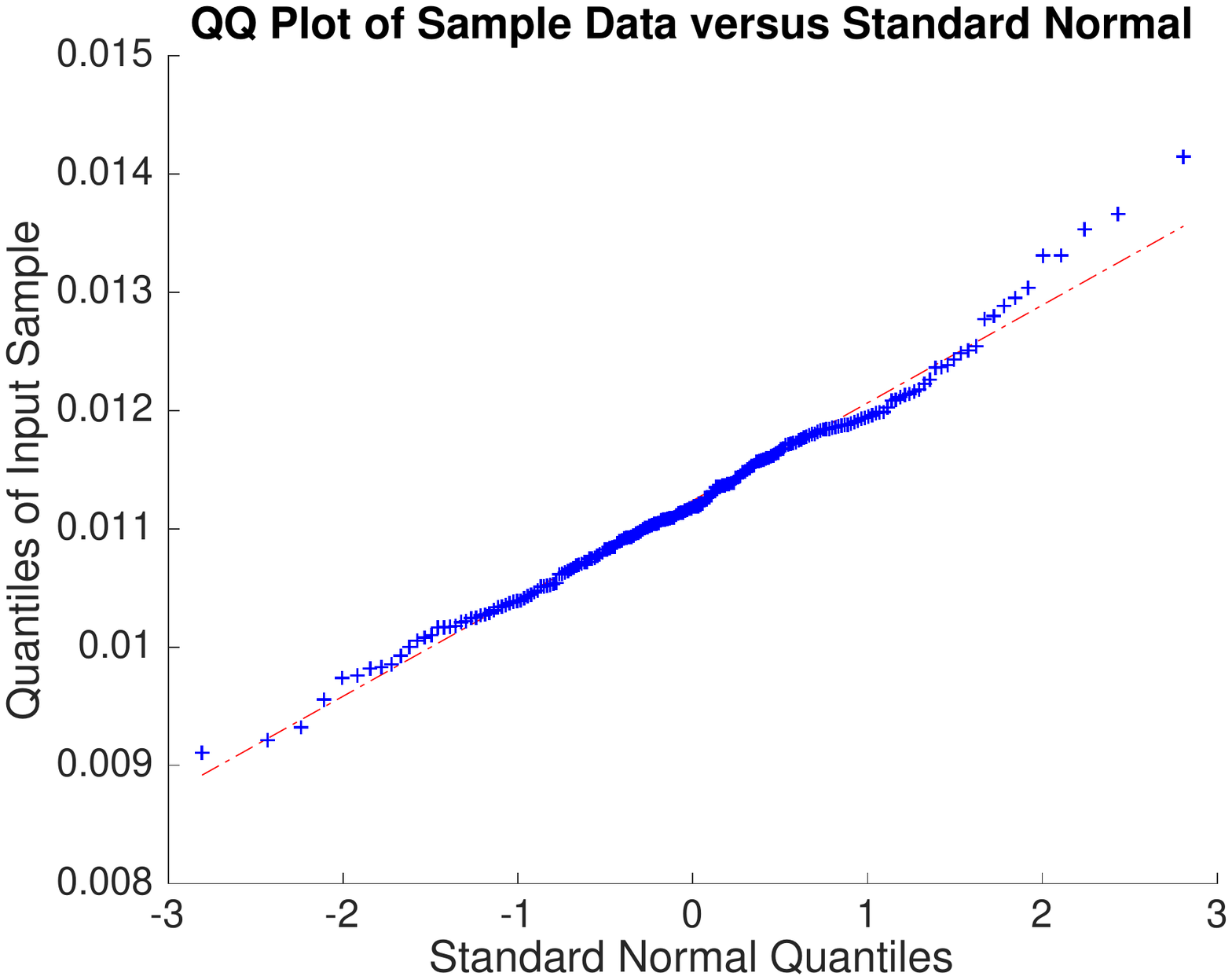}
\includegraphics[width=0.33\linewidth]{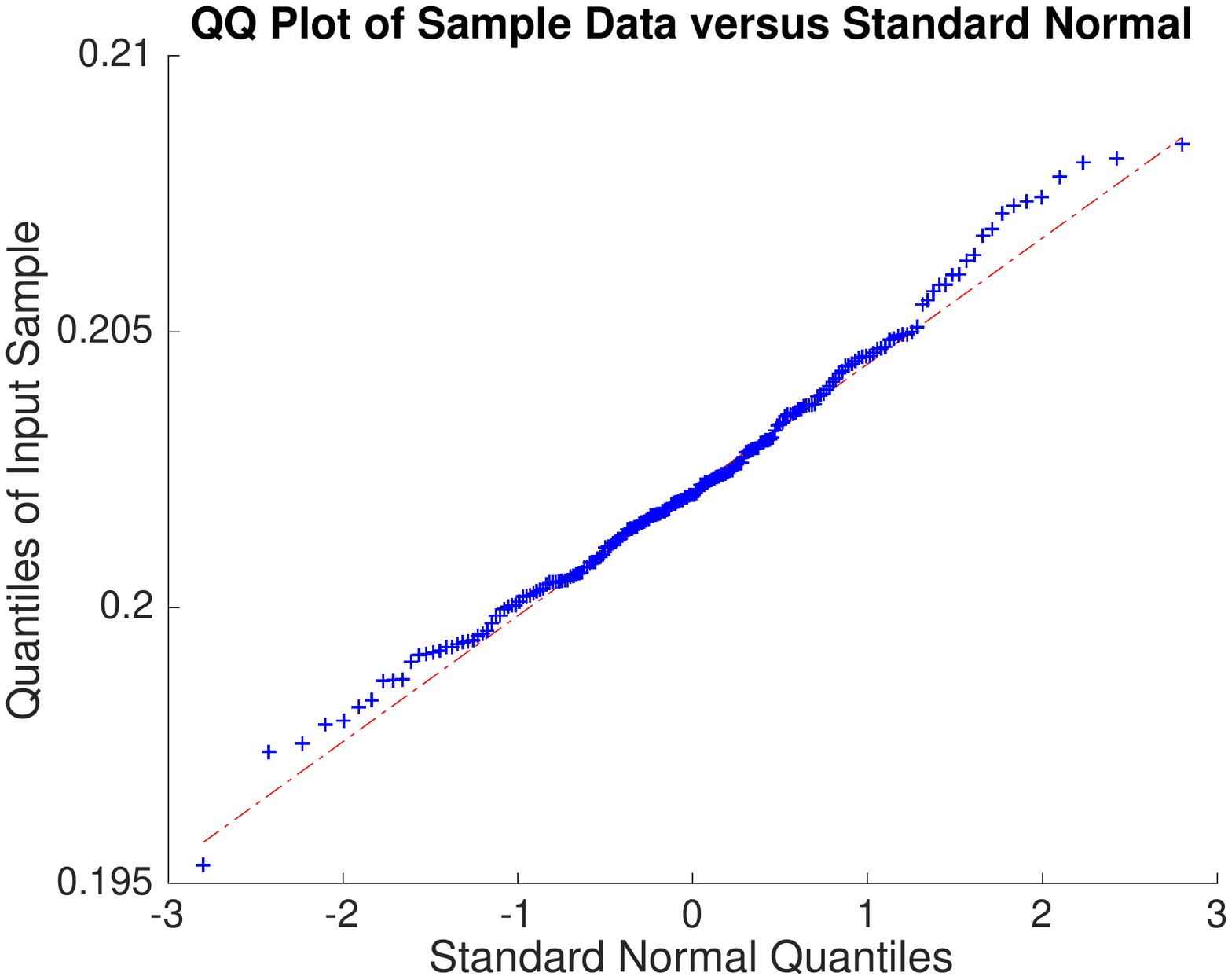}
\end{tabular}\vspace{-1.5cm}
\caption {The histograms and Q-Q plots of $M=200$ estimates.  Left: $t_5(0, I)$ and $t_{25}(0, I)$.   Right:  $t_5(0, I)$ and $t_5(0, 3I)$. \label{fig:3}}
\end{figure}

\noindent \textbf{Acknowledgement}\\

The authors thank the editor and the referees for carefully reading the manuscript and for the suggestions that improved the presentation. This research is supported by the College of Liberal Arts
Faculty Grants for Research and Creative Achievement at the University of Mississippi. The research of Hailin Sang is also supported by the Simons Foundation Grant 586789.

\end{document}